\title{\LARGE \bf
A Robust Server-Effort Policy for Fluid Processing Networks
}
\author{
    Harold~Ship\thanks{Harold Ship is with the IBM Research - Haifa Lab,Haifa University Campus, Mount Carmel, Haifa 3490002, Israel and a PhD student at the University of Haifa. (e-mail: harold@il.ibm.com).},
    Evgeny~Shindin\thanks{Evgeny Shindin is with the IBM Research - Haifa Lab, Haifa University Campus, Mount Carmel, Haifa 3490002, Israel. (e-mail: evgensh@il.ibm.com).},
	Odellia~Boni\thanks{Odellia Boni is with the IBM Research - Haifa Lab, Haifa University Campus, Mount Carmel, Haifa 3490002, Israel. (e-mail: odelliab@il.ibm.com).},
    Itai~Dattner\thanks{Itai Dattner is a Senior Lecturer at the University of Haifa Department of Statistics, University of Haifa, 199 Abba Khoushy, Haifa 3498838, Israel. (e-mail: idattner@stat.haifa.ac.il).}
    }
\newcommand*{\transpose}{\top}%
\newcommand*{\dd}{\ensuremath{\mathop{}\!\mathrm{d}}}%
\renewcommand*{\L}{\ensuremath{\mathop{}\!\mathbb{L}}}%
\newcommand*{\R}{\ensuremath{\mathop{}\!\mathbb{R}}}%
\newcommand*{\RR}[1]{\ensuremath{\mathop{}\!\mathbb{R}^{#1}}}%
\newcommand*{\uncertainset}[1]{\ensuremath{\mathcal{#1}}}%
\newcommand{\Tt}{^{{\mbox{\tiny \bf \sf T}}}}
\newcommand*{\oo}[1]{{\overline{#1}}}%
\newcommand{\ot}[1]{\ensuremath{\tilde{#1}}}%
\newtheorem{thm}{Theorem}
\crefname{thm}{theorem}{theorems}
\newtheorem{prop}[thm]{Proposition}
\crefname{prop}{proposition}{propositions}
\begin{document}

\maketitle

\thispagestyle{empty}
\pagestyle{empty}

\begin{abstract}
Multi-Class Processing Networks describe a set of servers that perform multiple classes of jobs on different items. A useful and tractable way to find an optimal control for such a network is to approximate it by a fluid model, resulting in a Separated Continuous Linear Programming (SCLP) problem. Clearly, arrival and service rates in such systems suffer from inherent uncertainty. A recent study addressed this issue by formulating a Robust Counterpart for SCLP models with budgeted uncertainty which provides a solution in terms of processing rates. This solution is transformed into a sequencing policy. However, in cases where servers can process several jobs simultaneously, a sequencing policy cannot be implemented.  In this paper, we propose to use in these cases a a resource allocation policy, namely,  the proportion of server effort per class. We formulate Robust Counterparts of both processing rates and server-effort uncertain models for four types of uncertainty sets: box, budgeted, one-sided budgeted, and polyhedral. We prove that server-effort model provides a better robust solution than any algebraic transformation of the robust solution of the processing rates model. Finally, to get a grasp of how much our new model improves over the processing rates robust model, we provide results of some numerical experiments.
\end{abstract}

\tikzstyle{server}=[%
rectangle,
minimum height=5cm,
text height=0.75cm,
text depth=.5cm,
text width=1cm,
inner xsep=1em,
inner ysep=1.5em,
text centered,
draw=black!50
]
\tikzstyle{server-horiz}=[%
rectangle,
text height=0.75cm,
text depth=.5cm,
text width=1cm,
inner xsep=1em,
inner ysep=1.5em,
text centered,
draw=black!50
]
\tikzstyle{server-label}=[%
fill=white,
font=\scriptsize
]
\tikzstyle{vserver}=[%
rectangle,
rounded corners,
minimum height=3cm,
text height=0.75cm,
text depth=.5cm,
text width=1cm,
inner xsep=1em,
inner ysep=1em,
text centered,
draw=black!50,
fill=black,
fill opacity=0.1
]
\tikzstyle{buffer}=[%
rectangle,
minimum height=1cm,
text height=0.75cm,
text depth=.5cm,
text width=1cm,
text centered,
font=\scriptsize,
inner sep=0pt,
draw=red!20,
fill=red!20
]
\tikzstyle{buffer-label}=[%
font=\tiny,
yshift=-0.2cm
]
\tikzstyle{input}=[%
rectangle,
minimum height=1cm,
text height=0.75cm,
text depth=.5cm,
text width=1.5cm,
text centered,
inner sep=0pt,
draw=white!0,
fill=white!0
]
\tikzstyle{output}=[%
rectangle,
minimum height=1cm,
text height=0.75cm,
text depth=.5cm,
text width=1cm,
text centered,
inner sep=0pt,
draw=white!0,
fill=white!0
]
\tikzstyle{task}=[%
rectangle,
minimum height=1cm,
text height=0.75cm,
text depth=.5cm,
text width=1cm,
text centered,
inner sep=0pt,
draw=black!50,
fill=orange!20
]
\tikzstyle{taskflow}=[%
semithick,
blue,
below,
->,
>=stealth
]

\section{Introduction}
\label{sec:introduction}
In multi-class processing networks \textit{items}
of different \textit{types} arrive at the system, where they are processed
in one or several ways,
then follow
individual paths through various \textit{service stations}. Each service station can perform \textit{jobs} of several different \textit{classes}, such that each job processes items of a specific type. Once processed, items either change type or leave the system.  To optimize system performance, one must control admissions, routing and sequencing of the items throughout the system \cite{Harrison1988}, \cite{Wein1992}, \cite{Kelly1993}, \cite{Dai1995}, \cite{Bramson2008}, \cite{Meyn2008}.
Such operational models are used in a number of applicative domains including manufacturing systems, multiprocessor
computer systems, communication networks, data centers, and
sensor networks. 
The straightforward formulation of such networks produces large stochastic dynamic programming problems that are computationally intractable. To overcome this, one can  approximate the network with a fluid model.
In order to find optimal solutions, we can formulate these networks as a specially structured class of continuous linear programs called Separated Continuous Linear Programs (SCLPs),
which 
has been studied over the last few decades.
Several algorithms  \cite{Pullan1993}, \cite{Luo1998}, \cite{Fleischer2005}, \cite{Weiss2008}, \cite{Bampou2012}, \cite{Shindin2018}, \cite{Shindin2021} have been developed to solve SCLPs and their generalizations. Some of these algorithms allow finding an exact solution. In particular, Weiss \cite{Weiss2008} developed the SCLP-simplex algorithm that provides an exact solution of SCLP problems in a finite bounded number of steps. 
Shindin and Weiss \cite{Shindin2014}, \cite{Shindin2015}, \cite{Shindin2018} extended this algorithm to more general M-CLP problems allows modeling impulse controls and solving general SCLP without additional assumption on the problem data.
Shindin et al \cite{Shindin2021} provide an efficient implementation of the revised SCLP-simplex algorithm which finds exact solutions for SCLP problems with hundreds of servers and thousands of job classes in reasonable time.
Nazarathy and Weiss \cite{Nazarathy2009} keep the deviations from the SCLP-generated model stable using a maximum pressure policy,
which provide \textit{stable} long-term solutions to fluid approximations to multi-class processing networks \cite{Dai2005}

Clearly, in a real-life application the arrival and processing rates of the various items may be uncertain and, moreover can change over  time.  There are two possible treatments to address this problem:
\begin{compactitem}
\item[-] Stochastic fluid model considered by Cassandras et al \cite{Cassandras2002} allows building gradient estimators of model parameters and then uses these estimators to build parameterized optimal control policies. This approach is mainly used for perturbation analysis and is computationally intractable for a large-scale processing networks. 
\item[-] Robust optimization approach deals with uncertain model parameters residing in a bounded {\em uncertainty set} and optimizes against the worst-case realization of the parameters within this set. Robust optimization treats the uncertainty in a deterministic manner and produces a tractable representation of the uncertain problem known as {\em robust counterpart}. In the context of fluid models this approach was considered by Bertsimas et al. \cite{bertsimas2014robust} who developed a robust counterpart for SCLP with a one-sided budgeted uncertainty set. The robust counterpart in \cite{bertsimas2014robust} is also an SCLP problem and thus can be solved by the algorithms discussed earlier.
\end{compactitem}

An important aspect of choosing a model formulation for a real-life problem is how to interpret its solution to a policy for the problem at hand. For example, for several types of processing networks, the model's solution should be transformed into a sequencing policy, namely, assigning priority to job classes so that when a server is done processing a job it will serve the next job from the class having highest priority. 
In the model suggested in \cite{bertsimas2014robust}, the system's control is formulated as processing rates for each job class. The resulting SCLP solution is used to assign priorities for the different job classes according to their processing rates at time $t=0$. However, in cases where servers can process several jobs simultaneously, we do not face a sequencing problem, but a resource allocation problem, namely, how much of the server resources should be dedicated to each class. The idea of controlling the proportion of server effort in fluid models has been used by \cite{Bramson_1996,Bramson_1998}. In this paper we consider a {\em server-effort fluid model}, where the control is formulated in terms of proportion of the server resources dedicated to serving specific classes of jobs. 

The contributions of the paper are as follows. 
(1) In \Cref{subsec:rc} we derive robust counterparts for both processing-rates and server-effort controlled fluid models under the different uncertainty sets discussed in \Cref{sec:fluid-processing-networks}.
(2) We compare the models under uncertainty and show that the server-effort model produces the same or better solution than the processing-rates model in \Cref{subsec:compare}.
(3) We demonstrate the performance improvement using a numerical comparison of these models for a processing network which can be easily transformed between these models in \Cref{sec:results}.

\section{Fluid Processing Networks}\label{sec:fluid-processing-networks}
In what follows, we make several assumptions. First, we consider only networks where each job class $j$ can be served by a single server denoted by $s(j)$, although each server can perform several classes of jobs. Second, all items of the specific type are stored in the dedicated buffer: when a job of a certain class is performed it takes an item from the buffer and the processed item either leaves the system or moves to another buffer for further processing. Therefore the arrival rate of a certain items is influenced by both the external arrival rate and the processing rates of the servers.  Finally, without loss of generality we also assume that buffer size is infinite. For the fluid approximation we consider fluids instead of individual items and flows instead of job classes.

\subsection{Example: A Criss-Cross Network}\label{ex:criss-cross-network}
    Let us consider the Criss-Cross network presented in \cite{bertsimas2014robust} and depicted in \Cref{fig:criss-cross-network}. In this network we have three types of items which are processed by jobs of the corresponding classes, so that we have one-to-one correspondence between item types and job classes. This network comprises two servers: The first server, $S_1$,
    contains two buffers $B_1$ and $B_2$ for items of types 1 and 2 respectively, that arrive from outside with rates $\lambda_1, \lambda_2$.
    Once a class 1 job is complete, it produces an item of type 3 that enters the buffer $B_3$ on server $S_2$. 
    For each job class $j$,
    let $\mu_j$ be the maximal service rate
    in jobs per unit time if the
    corresponding server $s(j)$ processes this class at full capacity.
    At time $t$, $x_k(t)$ is the quantity of items in the buffer $k$.
    
    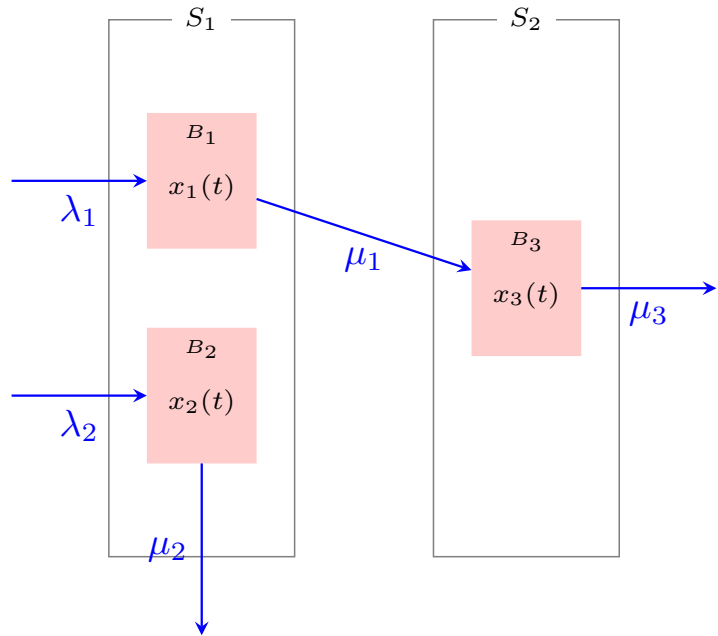
\begin{figure}[h!t]
        \centering
        \resizebox{0.8\columnwidth}{!}{
            \begin{tikzpicture}[node distance=1cm]
                \node[buffer] (B1) at (0,0) {$x_1(t)$};
                \node[buffer-label] at (B1.north) {$B_1$};
                \node[buffer] at (0, -2) (B2) {$x_2(t)$};
                \node[buffer-label] at (B2.north) {$B_2$};
                \node[buffer] at (3 , -1) (B3) {$x_3(t)$};
                \node[buffer-label] at (B3.north) {$B_3$};
                \node[input] (I1) [left=1.25cm of B1] {};
                \node[input] (I2) [left=1.25cm of B2] {};
                \node[output] (O2) [below=1.6cm of B2] {};
                \node[output] (O3) [right=1.25cm of B3] {};
                \node[server,fit=(B1)(B2)] (S1) {};
                \node[server-label] at (S1.north) {$S_1$};
                \node[server,fit=(B3)] (S2) {};
                \node[server-label] at (S2.north) {$S_2$};
                \path[taskflow] (I1) edge node {$\lambda_1$} (B1);
                \path[taskflow] (I2) edge node {$\lambda_2$} (B2);
                \path[taskflow] (B1) edge node {$\mu_1$} (B3);
                \path[taskflow, anchor=east] (B2) edge node {$\mu_2$} (O2);
                \path[taskflow] (B3) edge node {$\mu_3$} (O3);
            \end{tikzpicture}
        }
        \vspace{-3em}
        \caption[Criss-cross network]{
            \label{fig:criss-cross-network}
            A Criss-cross network.
             }
        \vspace{-1em}
    \end{figure}
    
    We formulate this queuing network as a fluid model by following:
    Let $\alpha_k = x_k(0)$ and $\lambda_k$ be the initial amount of fluid and the exogenous arrival rate of the fluid to the buffer $k$ respectively. Flow of class $j=k$ empties buffer $k$ with maximum rate $\mu_j$. Let $x_k(t)$ denote the total amount of fluid in the buffer $k$ at time $t$. Here $x_k(t)$ are the system's state variables. The actual decision variables are the system's control variables relating to processing of different flows by the corresponding server. 
    
    There are two ways we can model this fluid network:
    \begin{compactenum}[(A)]
        \item We can control $u_j(t)$ - the actual processing rate of flow unit of class $j$ per unit time at time $t$, so that $0\leq u_j(t) \leq \mu_j$.
        In this case the dynamics of the system can be expressed as:
        \begin{equation*}
            \label{eqn:dyn1}
            \begin{array}{ll}
                x_1(t) = \alpha_1 +\lambda_1 t - \int_0^t u_1(s)ds & \forall t,\\
                x_2(t) = \alpha_2 +\lambda_2 t - \int_0^t u_2(s)ds & \forall t,\\
                x_3(t) = \alpha_3 + \int_0^t u_1(s)ds - \int_0^t u_3(s)ds & \forall t, 
            \end{array}   
        \end{equation*}                  
        where the sum of the actual processing rates should not exceed the total server capacity:
        \begin{equation*}
            \label{eqn:eff1}
            \begin{array}{ll}
                \frac{u_1(t)}{\mu_1} + \frac{u_2(t)}{\mu_2} \le 1 & \forall t, \\
                \frac{u_3(t)}{\mu_3} \le 1 & \forall t.
            \end{array}
        \end{equation*}
        \item  Alternatively, we can control $0\leq \eta_j(t)\leq 1$ which is the dimensionless proportion of the server effort dedicated to flow $j$ at time $t$. Clearly, the actual processing rates are proportional to the dedicated server efforts: $u_j(t) = \eta_j(t)\mu_j$. 
        In this case the dynamics of the system can be expressed as:
        \begin{equation*}
            \label{eqn:dyn2}
            \begin{array}{ll}
            x_1(t) = \alpha_1 +\lambda_1 t - \int_0^t \mu_1 \eta_1(s)ds & \forall t,\\
            x_2(t) = \alpha_2 +\lambda_2 t - \int_0^t \mu_2 \eta_2(s)ds & \forall t,\\
            x_3(t) = \alpha_3 + \int_0^t \mu_1 \eta_1(s)ds - \int_0^t \mu_3 \eta_3(s)ds & \forall t, 
            \end{array}
        \end{equation*}
        where the total server effort should not exceed $1$:
        \begin{equation*}
            \label{eqn:eff2}
            \begin{array}{ll}
                \eta_1 (t) + \eta_2 (t) \le 1 & \forall t, \\
                \eta_3 (t) \le 1 & \forall t.
            \end{array}
        \end{equation*}
    \end{compactenum}
\subsection{General processing networks}
Consider a processing network with $I$ servers and $J$ different job classes.
We also have $k=1,\dots, K$ buffers
so that jobs of class $j$ process items of type $k = k(j)$.
Each job class $j$ has an associated server $i = s(j)$ that processes jobs of this class.
Items processed by the job class $j$ routed to the buffer $k$ with probability $p_{j,k}$ or leave the system with probability $1 - \sum_k p_{j,k}$.

In the associated fluid model the flow of class $j$ empties buffer $k=k(j)$ and is processed by the server $i=s(j)$. After the processing fluid comes to the buffer $l$ in proportion $p_{j,l}$ or leaves the system in proportion  $1 - \sum_l p_{j,l}$. To simplify the notation we define matrix $G$ of size $K \times J$  and set $G_{k,j} = 1$ for $k=k(j)$ and $G_{k,j} = - p_{j,k}$ for $k \ne k(j)$. 
We let $\lambda_k$ be the rate of external arrivals to the buffer $k$ and $\alpha_k$ be initial amount of fluid in the buffer $k$.

Let $\mu_j$ be the processing rate of flow $j$ when the server devotes all of its effort to this flow, and $\tau_j = 1/\mu_j$ be mean service time per unit of flow $j$. 
Similarly to example in \ref{ex:criss-cross-network}, for cost functions $f,g$ on $\RR{K},\RR{J}$ we can define two types of control policies:
\begin{compactenum}[(A)]
    \item \textbf{\Gls{modelA}} our decisions are actual processing rates $u_j(t) = \eta_j(t)\mu_j$. This is the approach taken in \cite{bertsimas2014robust}. In this case the optimal control can be found by solving following optimization problem:
\begin{align}
 \min_{x(t), u(t)} & \int_0^T f(x(t)) + g(u(t)) \dd t , \nonumber \\
    \label{eqn:dyn1g}
 s.t.\quad & \int_0^t  \sum_{j} G_{k,j} u_{j}(s)\dd s + x_k(t) = \alpha_k + \lambda_k t\;
            \forall k,t, \\
    \label{eqn:eff1g}     
      & \sum_{j: s(j)=i} \tau_j u_j(t) \le 1\; \forall i,t, \\
      & u(t), x(t) \ge 0. \nonumber
\end{align}
    \item \textbf{\Gls{modelB}} our decisions are proportions of the server effort $\eta_j(t)$. In this case the optimal control can be found by solving following optimization problem, where $\circ$ denotes elementwise multiplication:
    \begin{align}
 \min_{x(t), \eta(t)}  & \int_0^T f(x(t)) + g(\mu \circ \eta(t)) \dd t , \nonumber \\
    \label{eqn:dyn2g}
 s.t. \quad&  \int_0^t  \sum_{j} G_{k,j} \mu_j \eta_{j}(s)\dd s + x_k(t) = \alpha_k + \lambda_k t\;
            \forall k,t, \\
    \label{eqn:eff2g}             
      &  \sum_{j: s(j)=i} \eta_j(t) \le 1\; \forall i,t, \\
      & \eta(t), x(t) \ge 0. \nonumber
\end{align} 
 \end{compactenum}
One can see that these models are interchangeable,
meaning that optimal solutions can be obtained each from other by substituting
$\eta_j(t) = {\tau_j}{u_j(t)} ~\forall j$, or $u_j(t) = \mu_j \eta_j(t) ~\forall j$.
In the next section we show that the equivalence between solutions the models holds only for the case when service rates are certain.

\section{Fluid Processing Networks under Uncertainty}
\label{sec:uncertainty}
\subsection{Modelling Uncertainty}
\label{subsec:Uncertainty-model}
In practice arrival rates $\lambda_k$, service rates $\mu_j$, and/or mean service times $\tau_j = 1/\mu_j$ can be uncertain, meaning that we do not know their exact values at optimization time. Moreover these parameters can also change over time. 
We assume that the unknown parameters belong to some bounded uncertainty set $\uncertainset{U}$. The uncertainty set is assumed to be known (or estimated from historical data) a priori.

For a given uncertainty set $\uncertainset{U}$,
one can define a new optimization problem whose solution is feasible under all possible realizations
of the uncertain parameters 
Such a problem is known as the {\em robust counterpart}.
The formulation of the robust counterpart depends both on the initial problem and the the uncertainty set. 
The robust approach is conservative in that it protects against the worst-case scenario. 
It should be noted that a robust counterpart is created per constraint, so constraints are treated individually even if the same uncertain parameters are shared between them. This is conservative because the worst-case scenario for one constraint can be different than for another.

Let us denote all uncertain time-varying parameters of a problem by $\theta_\ell(t), \ell=1,\dots, L$. We formulate each of these parameters as $\theta_\ell(t) =\oo{\theta}_\ell + \ot{\theta}_\ell\zeta_\ell(t)$ where $\oo{\theta}_\ell$ is called the \textit{nominal} value of the uncertain parameter $\theta_\ell(t)$, $\ot{\theta}_\ell$  denotes the maximal \textit{deviation} of $\oo{\theta}_\ell$ from its nominal value, and $\zeta_\ell(t)$ is called the \textit{perturbation}. 
To keep robust counterpart simple and computationally tractable we model uncertainty using one of the following uncertainty sets:
\begin{compactitem}
    \item[-] {\em Box uncertainty set.} The perturbations affecting the uncertain parameters reside in a box : $|\zeta_\ell(t)|\le 1 \forall t, \ell=1.\dots,L$. This means that  
       $\theta_\ell(t)$ can take values in the interval $[\oo{\theta}_\ell - \ot{\theta}_\ell, \oo{\theta}_\ell + \ot{\theta}_\ell]$. Such a set indicates that the perturbations are independent of each other.
     \item[-] {\em Budgeted uncertainty set.} Consider, that in addition to the box uncertainty for some group of parameters $\L$ we have an additional restriction on the total magnitude of the perturbations: $\sum_{\ell \in \L} |\zeta_\ell(t)| \le \Gamma$. When $\Gamma \le \|\L\|$ is called the uncertainty budget. As robust counterpart is formulated per constraint, the uncertainty budget also refers to the specific constraint and refers to the uncertain parameters related to this constraint.
     \item[-] {\em Budgeted one-sided uncertainty set.}
     Similar to the budgeted uncertainty set, but here one consider only positive perturbations $0 \le \zeta_\ell(t) \le 1 \forall t$ and restrict on the total sum by: $\sum_{\ell \in \L} \zeta_\ell(t) \le \Gamma$.
     \item[-] {\em Polyhedral uncertainty set.}
    The perturbations reside in a polyhedron, meaning  $\{\zeta(t) | D \zeta(t) +d \ge 0\} \forall t$, where $D\in R^{M \times L}$ and $d\in R^M$ for some $M > 0$.
\end{compactitem}
Note that level of conservativeness of the robust counterpart depends on the volume of the uncertainty set. Thus, in general, it is desirable to use a smaller uncertainty set.


The uncertain parameters referring to arrival rates $\lambda_k$ appear in both \gls{modelA} and \gls{modelB}. The other uncertain parameters vary between the models: \gls{modelA} is affected by uncertainty in mean service time per flow unit $\tau_j(t) = \oo{\tau}_j + \ot{\tau}_j\zeta_j(t)$, 
while in \gls{modelB} the uncertain parameters are service rate flow per unit time $\mu_j(t) = \oo{\mu}_j - \ot{\mu}_j\zeta_j(t)$.
Since $\tau_j=1/\mu_j$, the shape of the uncertainty set affecting $\tau_j$ may vary from the shape of the uncertainty set affecting $\mu_j$. Moreover, those parameters in different models are related to the different constraints and hence, even though in their deterministic form \gls{modelA} and \gls{modelB} are equivalent, their robust counterparts, and therefore the solutions they yield, may differ.

\subsection{Robust counterparts}
\label{subsec:rc}
As mentioned in \Cref{subsec:Uncertainty-model} formulation of a robust counterpart of an uncertain constraint depends both on the original constraint and on the shape of the uncertainty set its uncertain parameters reside in.
In this section we present tractable robust counterparts for the different constraints of the \gls{modelA} and the \gls{modelB} for the
uncertainty sets described in \Cref{subsec:Uncertainty-model}.
Proofs of the propositions below can be found in the Appendix.

As the robust counterpart is formulated per constraint, we consider the uncertainty set affecting the uncertain parameters for each constraint individually. Therefore, for system dynamics constraints in the \gls{modelA} (\Cref{eqn:dyn1g}) 
which contain only a single uncertain parameter - arrival rate $\lambda_k$, all uncertainty sets
are equivalent to a box uncertainty, resulting in the same robust counterpart:

\begin{prop}
    \label{prop:balance-model-a}
    The robust counterpart of \Cref{eqn:dyn1g}
    for all uncertainty sets shapes
    is
    \begin{equation}
    \label{eqn:robust-counterpart-equation-balance-a}
        \alpha_k + (\oo{\lambda}_k - \ot{\lambda}_k) t - 
            \!\! \sum\limits_{j} \int_0^t G_{kj} u_{j} (s)\dd s \ge 0, \forall k,t \text{.}
    \end{equation}
\end{prop}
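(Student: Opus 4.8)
The plan is to eliminate the state variable, reduce \Cref{eqn:dyn1g} together with the nonnegativity requirement $x(t)\ge 0$ to a single inequality in the control $u$, and then pass to the worst case over the uncertainty set. In the robust setting $u$ is the non-adaptive, here-and-now decision, whereas for any realization of the uncertain arrival rate the balance equation pins down the state as $x_k(t)=\alpha_k+\lambda_k t-\sum_j\int_0^t G_{kj}u_j(s)\dd s$; hence the only genuine restriction that \Cref{eqn:dyn1g} places on $u$ is that this expression be nonnegative, i.e.
\[
 \alpha_k+\lambda_k t-\sum_j\int_0^t G_{kj}u_j(s)\dd s\ \ge\ 0\qquad\forall t,
\]
for every admissible realization of $\lambda_k$ (reading $\lambda_k t$ as the cumulative arrivals $\int_0^t\lambda_k(s)\dd s$ in case the rate is itself time-varying).

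Next I would exploit the fact that this inequality is affine in $\lambda_k$, which is the \emph{only} uncertain quantity appearing in \Cref{eqn:dyn1g}, and that $\lambda_k$ enters with the nonnegative coefficient $t$. Consequently the left-hand side is minimized by making the (cumulative) arrival rate as small as the uncertainty set allows. Writing the perturbed rate as $\lambda_k(t)=\oo{\lambda}_k-\ot{\lambda}_k\zeta_k(t)$, the minimum is attained at the extreme perturbation $\zeta_k\equiv 1$, so the least cumulative arrivals over $[0,t]$ equal $(\oo{\lambda}_k-\ot{\lambda}_k)t$. Substituting this worst case back into the inequality produces exactly \Cref{eqn:robust-counterpart-equation-balance-a}, which is once more an ordinary deterministic constraint of the same affine shape; since replacing an uncertain affine inequality by its worst-case instance is an exact robust reformulation, this establishes the claim.

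The one point that needs care — and the reason the statement is claimed ``for all uncertainty set shapes'' — is that the worst-case value above does not depend on which of the box, budgeted, one-sided budgeted, or polyhedral sets of \Cref{subsec:Uncertainty-model} is used. Because a robust counterpart is formed per constraint and \Cref{eqn:dyn1g} involves the single coordinate $\zeta_k$ only, I would argue that the projection of each of the four sets onto that coordinate still lets $\zeta_k$ reach its extreme value $1$: for the box set this is immediate; for the (one-sided) budgeted sets a budget on a group degenerates to a non-binding condition at the single-coordinate vertex; and the polyhedral description relevant here is assumed to contain that extreme. In each case the minimizing cumulative arrival is $(\oo{\lambda}_k-\ot{\lambda}_k)t$, so the four robust counterparts coincide. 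I expect this bookkeeping — making precise that a lone uncertain parameter collapses all four set definitions to the same worst case, and that an affine function attains its minimum at the interval endpoint rather than in the interior — to be the only real obstacle; everything else is the textbook reduction of an uncertain affine inequality to its worst-case instance.
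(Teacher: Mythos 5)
Your proposal is correct and follows essentially the same route as the paper: since \Cref{eqn:dyn1g} involves only the single uncertain parameter $\lambda_k$ entering affinely with nonnegative coefficient $t$, every uncertainty set reduces (per constraint) to box uncertainty on that coordinate, and the worst case is the lower endpoint $\oo{\lambda}_k-\ot{\lambda}_k$, yielding \Cref{eqn:robust-counterpart-equation-balance-a}. Your extra remarks on eliminating $x_k(t)$ and on the projection of the budgeted and polyhedral sets onto one coordinate only make explicit what the paper's proof leaves implicit.
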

 
For server capacity constraints in \gls{modelA} the uncertain parameters are mean service time per flow unit $\tau_j$ related to all flows processing by server $s(j)$.

\begin{prop}
    \label{prop:server-model-a}
    The robust counterparts of \Cref{eqn:eff1g}
    are given in \Cref{tab:robust-counterpart-equation-server-capacity-a}.
\end{prop}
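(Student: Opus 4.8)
I would derive the robust counterpart of the server capacity constraint \Cref{eqn:eff1g}, namely $\sum_{j:\,s(j)=i} \tau_j u_j(t) \le 1$, by treating each server $i$ and each time $t$ separately and viewing it as a single linear inequality in the uncertain coefficients $\tau_j(t) = \oo{\tau}_j + \ot{\tau}_j \zeta_j(t)$ for $j$ in the index set $\{j : s(j) = i\}$. Substituting the affine parametrization, the constraint becomes $\sum_{j:\,s(j)=i}\bigl(\oo{\tau}_j + \ot{\tau}_j\zeta_j(t)\bigr) u_j(t)\le 1$, and the robust requirement is that this hold for every admissible perturbation vector $\zeta(t)$. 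Since $u(t)\ge 0$, the worst case is obtained by pushing each $\zeta_j(t)$ in the direction that increases the left-hand side, so the robust counterpart is
\begin{equation*}
\sum_{j:\,s(j)=i}\oo{\tau}_j u_j(t) + \max_{\zeta \in \uncertainset{U}_i}\sum_{j:\,s(j)=i}\ot{\tau}_j u_j(t)\,\zeta_j \;\le\; 1,\qquad \forall i, t,
\end{equation*}
and the remaining work is to evaluate that inner maximization for each of the four uncertainty-set shapes, which is exactly the content of \Cref{tab:robust-counterpart-equation-server-capacity-a}.

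For the four cases the inner optimization is a small, standard exercise. For the \emph{box} set ($|\zeta_j|\le 1$) the maximum is $\sum_j \ot{\tau}_j u_j(t)$, giving $\sum_j(\oo{\tau}_j+\ot{\tau}_j)u_j(t)\le 1$. For the \emph{budgeted} set ($|\zeta_j|\le 1$, $\sum_j|\zeta_j|\le\Gamma$) I would dualize the linear program $\max\{\sum_j \ot{\tau}_j u_j(t)\zeta_j : |\zeta_j|\le 1,\ \sum_j|\zeta_j|\le\Gamma\}$; introducing a multiplier $r\ge 0$ for the budget constraint and multipliers $q_j\ge 0$ for the box constraints yields the classic Bertsimas--Sim form $\sum_j \oo{\tau}_j u_j(t) + \Gamma r + \sum_j q_j \le 1$, $r + q_j \ge \ot{\tau}_j u_j(t)$, $r,q\ge 0$ (with the usual simplification that one may drop the absolute value on $u_j$ because $u\ge 0$). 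Since this introduces auxiliary variables $r(t), q_j(t)$, I would note that these are themselves functions of $t$ and that the resulting system is still of the SCLP/linear type so the earlier solvability remarks apply. The \emph{one-sided budgeted} case ($0\le\zeta_j\le 1$, $\sum_j\zeta_j\le\Gamma$) is handled identically except that, because only positive perturbations occur and $u\ge 0$, the box bounds $q_j$ need only cover the positive part, giving essentially the same counterpart — this is precisely the case treated in \cite{bertsimas2014robust}. Finally, for the \emph{polyhedral} set $\{\zeta : D\zeta + d \ge 0\}$, strong LP duality on $\max\{(\ot{\tau}\circ u(t))^\transpose \zeta : D\zeta + d \ge 0\}$ gives the counterpart $\sum_j\oo{\tau}_j u_j(t) + d^\transpose w(t)\le 1$, $D^\transpose w(t) = \ot{\tau}\circ u(t)$ (up to sign conventions on $D$), $w(t)\ge 0$, provided the polyhedron is nonempty and bounded so that strong duality holds with no gap.

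**Main obstacle.** The genuinely delicate point is not any single dualization but the justification that the per-constraint, per-time robustification is legitimate in this infinite-dimensional (time-indexed) setting: one must argue that requiring the inequality to hold for all $\zeta(t)$ pointwise in $t$ is equivalent to the original semi-infinite robust constraint, and that the auxiliary dual variables can be chosen as measurable functions of $t$ without destroying the SCLP structure. I would address this by invoking the standard observation that the uncertainty set is the same box/budget/polyhedron at every $t$ and the constraint is separable across $t$, so a measurable selection of optimal dual variables exists and the robust counterpart decomposes timewise; the boundedness assumptions on $\uncertainset{U}$ from \Cref{subsec:Uncertainty-model} guarantee the needed strong duality with attainment. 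The remaining steps — plugging in each set shape and reading off the table — are then routine, and I would simply present the four resulting systems as the entries of \Cref{tab:robust-counterpart-equation-server-capacity-a}.
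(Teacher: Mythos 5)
Your proposal is correct and follows essentially the same route as the paper: robustify each server-capacity constraint pointwise in $i$ and $t$, take the worst-case perturbation directly for the box set, and apply LP duality (Bertsimas--Sim style for the budgeted sets, strong duality for the polyhedral set) to obtain the entries of \Cref{tab:robust-counterpart-equation-server-capacity-a}, just as the paper does by introducing auxiliary variables for $|\zeta_j|$ and dualizing. Your additional remarks on timewise separability and measurable selection of the dual variables are extra justification the paper leaves implicit, not a divergence in method.
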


\begin{table}[!hptb]
    \centering
    \begin{tabular}{ll}
        \toprule
        Uncertainty set & Robust Counterpart \\
        \midrule
        Box & $\sum\limits_{j: s(j)=i} \left( \oo{\tau}_j + \ot{\tau}_j \right) u_j(t) \le 1$ \\
        \midrule[0.02\lightrulewidth] \\
        \multirow[c]{3}{1.5cm}{One-sided \& Budgeted}
            & $\Gamma_{i} \beta_{i}(t)+\sum\limits_{j: s(j)=i}\left(\oo{\tau}_{j}
                u_{j}(t)+\gamma_{j}(t)\right) \leq 1,$ \\
            & $\beta_{i}(t)+\gamma_{j}(t)-u_{i}(t) \ot{\tau}_{i} \geq 0,\; j=1,\dots, J 
                    $ \\
                    & $\beta(t), \gamma(t) \ge 0$ \\
        \midrule[0.02\lightrulewidth] \\
        \multirow[c]{2}{1.5cm}{Polyhedral}
            & $\sum\limits_{j} \oo{\tau}_{j} u_{j}(t)
                - \sum\limits_{m} \delta_{m}(t) d_m \leq 1,$ \\
            & $ \sum\limits_{m} D_{mj} \delta_{m} = u_{j}(t) \ot{\tau}_{j}, \; j=1,\dots, J,\;  \delta \ge 0$. \\
        \bottomrule
    \end{tabular}
    \caption{\Gls{modelA}: Robust counterpart to server capacity constraints \Cref{eqn:eff1g}}
    \label{tab:robust-counterpart-equation-server-capacity-a}
\end{table}

For the \gls{modelB} the uncertain parameters in each of the constraints (\ref{eqn:dyn2g}) are service rates per unit time of all related flows $\mu_j$ and also exogenous arrival rate $\lambda_k$, while there is no uncertainty affecting \Cref{eqn:eff2g}. 
\begin{prop}
    \label{prop:balance-model-b}
    The robust counterparts of \Cref{eqn:dyn2g}
    are given in \Cref{tab:robust-counterpart-equation-balance-b}.
\end{prop}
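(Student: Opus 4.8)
The plan is to reuse, for the Model~B dynamics constraint \Cref{eqn:dyn2g}, the recipe behind \Cref{prop:balance-model-a} and \Cref{tab:robust-counterpart-equation-server-capacity-a}; the new wrinkle is that \Cref{eqn:dyn2g} carries two groups of uncertain parameters at once --- the arrival rate $\lambda_k$ on the right and the service rates $\mu_j$ multiplying the controls $\eta_j$ on the left. First I would combine the equality in \Cref{eqn:dyn2g} with $x_k(t)\ge 0$ to write the constraint as $\alpha_k+\int_0^t\lambda_k(s)\dd s-\int_0^t\sum_j G_{kj}\mu_j(s)\eta_j(s)\dd s\ge 0$ for all $t$, substitute the affine parametrizations $\mu_j(s)=\oo{\mu}_j-\ot{\mu}_j\zeta_j(s)$ and $\lambda_k(s)=\oo{\lambda}_k-\ot{\lambda}_k\zeta_k(s)$ (the latter signed so that, as in \Cref{prop:balance-model-a}, small arrivals are the adverse case), and split the left-hand side into a nominal term $N_k(t)=\alpha_k+\oo{\lambda}_k t-\sum_j G_{kj}\oo{\mu}_j\int_0^t\eta_j(s)\dd s$ and a perturbation term $\int_0^t c_k(s)^{\transpose}\zeta(s)\dd s$, whose coefficient vector $c_k(s)$ has entries $-\ot{\lambda}_k$ and $G_{kj}\ot{\mu}_j\eta_j(s)$. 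The robust counterpart is then $N_k(t)+\min_{\zeta(\cdot)}\int_0^t c_k(s)^{\transpose}\zeta(s)\dd s\ge 0$ for all $t$, the minimum ranging over all $\zeta(\cdot)$ with $\zeta(s)\in\uncertainset{U}$ at each $s$.

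The key reduction is that all four uncertainty sets of \Cref{subsec:Uncertainty-model} constrain $\zeta(s)$ instant by instant, with no coupling across time (even the budget $\sum_\ell|\zeta_\ell(s)|\le\Gamma$ is imposed pointwise), while the integrand depends on $\zeta$ only through its current value; hence the adversary's best response is pointwise, depends on $c_k(s)$ but not on the horizon $t$, and $\min_{\zeta(\cdot)}\int_0^t c_k(s)^{\transpose}\zeta(s)\dd s=\int_0^t\varphi_k(s)\dd s$ with $\varphi_k(s)=\min_{\zeta(s)\in\uncertainset{U}}c_k(s)^{\transpose}\zeta(s)$. It then remains to evaluate this small static program for each set shape, exactly as for \Cref{prop:server-model-a}: for a box set $\varphi_k(s)=-\ot{\lambda}_k-\sum_j|G_{kj}|\ot{\mu}_j\eta_j(s)$ in closed form, using $\eta_j(s),\ot{\mu}_j\ge 0$; for a budgeted set (one-sided or two-sided) one dualizes the inner LP, producing a budget multiplier $\beta_k(s)\ge 0$ and box multipliers $\gamma_{k,\ell}(s)\ge 0$ linked by $\beta_k(s)+\gamma_{k,\ell}(s)\ge|c_{k,\ell}(s)|$; and for a polyhedral set LP duality turns $\min\{c_k(s)^{\transpose}\zeta:D\zeta+d\ge 0\}$ into $\max\{-d^{\transpose}\delta:D^{\transpose}\delta=c_k(s),\,\delta\ge 0\}$, producing a multiplier function $\delta_k(\cdot)$. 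Since $\varphi_k(s)$ equals the attained optimum of each of these dual programs, the requirement $N_k(t)+\int_0^t\varphi_k(s)\dd s\ge 0$ for all $t$ becomes: there exist feasible multiplier functions such that $N_k(t)$ plus the time-integral of the corresponding dual objective is nonnegative for all $t$. Reading off these statements yields the rows of \Cref{tab:robust-counterpart-equation-balance-b}; the multipliers enter linearly and become extra controls, so the counterpart stays within the SCLP framework.

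The step I expect to be the main obstacle --- the genuinely new feature relative to \Cref{prop:balance-model-a} --- is the sign bookkeeping forced by $G_{kj}$: a flow with $G_{kj}>0$ drains buffer $k$ whereas one with $G_{kj}<0$ feeds it, so the adversary pushes $\mu_j$ in opposite directions in the two cases. This is what makes $|G_{kj}|$ appear in the box row and, in contrast with \Cref{prop:server-model-a} where every coefficient was nonnegative, can make the one-sided and two-sided budgeted counterparts differ --- a one-sided budget, admitting only $\mu_j(s)\le\oo{\mu}_j$, can degrade the balance of buffer $k$ through the feeding flows but not the draining ones. A subsidiary point is whether the chosen set places the arrival perturbation $\zeta_k$ in the same budget or polyhedron as the service perturbations $\zeta_j$ --- in which case the dual reformulation treats them jointly --- or whether $\lambda_k$ has its own set, which, being one-dimensional, is box-equivalent as in \Cref{prop:balance-model-a}, so that its contribution decouples into the familiar $(\oo{\lambda}_k-\ot{\lambda}_k)t$ term added to the service-rate part. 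Finally one should record that the min/integral interchange is legitimate because the integrand is affine in $\zeta$, hence continuous and bounded on the compact uncertainty set, and $\eta$ is measurable.
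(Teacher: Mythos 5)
Your high-level recipe is the same as the paper's: treat each dynamics constraint of \Cref{eqn:dyn2g} separately, take the worst case over the uncertainty set, and convert the inner linear program into dual multipliers shape by shape; the sign bookkeeping you emphasize (draining vs.\ feeding flows, and the resulting asymmetry under a one-sided budget) is exactly the observation the paper uses when it notes that for the one-sided set the adversary puts no perturbation on flows that help the constraint. One structural detail you only gesture at: in the paper the budgets are per server, $\Gamma_i$, so the inner problem decomposes into one LP subproblem per pair $(k,i)$ and the duals carry three indices $(k,i,j)$, which is how the rows of \Cref{tab:robust-counterpart-equation-balance-b} are organized; the arrival-rate perturbation is kept separate and handled as in \Cref{prop:balance-model-a}.

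The genuine gap is in the final ``reading off'' step, and it comes from the order in which you dualize. The paper first pulls the perturbation out of the time integral: for each fixed $t$ the worst case is taken over a single vector $\zeta$ acting on the integrated coefficients $G_{kj}\ot{\mu}_j\int_0^t\eta_j(s)\dd s$, and that finite LP is then dualized. This is why in \Cref{tab:robust-counterpart-equation-balance-b} the multipliers $\beta_{ki}(t),\gamma_{kij}(t),\delta_{km}(t)$ appear directly (not as time integrals) while the dual constraints contain $\int_0^t\eta_j(s)\dd s$. Your route --- dualize pointwise in $s$, then integrate the dual objective --- yields constraints featuring $\int_0^t\beta(s)\dd s$ and $\int_0^t\gamma(s)\dd s$ with pointwise dual feasibility in $s$. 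You can map this into the table by renaming those integrals as the table's multipliers, but that only shows your counterpart implies the table's; the reverse direction needs the claim that the worst case over time-varying admissible perturbations is attained by a perturbation constant on $[0,t]$. That is immediate for the box set (each coefficient has fixed sign), but for the budgeted, one-sided and polyhedral sets, which are imposed pointwise in time, the adversary can reallocate its budget as the ordering of $|G_{kj}|\ot{\mu}_j\eta_j(s)$ changes over time, so your formulation is in general strictly more conservative than the table's rather than identical to it. To prove the proposition as stated you must either adopt and justify the paper's extraction step (the part the paper itself treats tersely as ``exchange integration and maximization'') or explicitly reconcile the two formulations. A smaller mismatch: for the two-sided budgeted row the paper keeps the lifted variables (hence the ``$-2\delta_{kij}$'' constraints) because the coefficients $G_{kj}\ot{\mu}_j\int_0^t\eta_j(s)\dd s$ change sign across $j$; your shortcut $\beta+\gamma\ge|c|$ is linearizable here since $\eta\ge 0$, but it will not literally reproduce that row.
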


\begin{table*}
    \centering
    \begin{tabular}{ll}
        \toprule
        Uncertainty set & Robust Counterpart \\
        \midrule
        Box & $
            \alpha_k + (\oo{\lambda}_k - \ot{\lambda}_k) t -
            \sum\limits_{j: k(j)=k} \!\! \int_0^t (\oo{\mu}_j + \ot{\mu}_j) 
            \eta_{j} (s)\dd s
            + \sum\limits_{j:k(j) \ne k} \!\! \int_0^t G_{kj} (\oo{\mu}_{j} - \ot{\mu}_{j}) \eta_{j}(s) \dd s \ge 0$ \\
        \midrule[0.02\lightrulewidth] \\
        \multirow[c]{2}{1.25cm}{One-sided}
            & $\alpha_k + (\oo{\lambda}_k - \ot{\lambda}_k) t
                - \sum\limits_{j} \!\! G_{kj} \oo{\mu}_j
                    \int_0^t  \eta_{j} (s)\dd s
                + \sum\limits_{i} \!\! \Gamma_i \beta_{ki}(t) 
                + \sum\limits_i \!\sum\limits_{j:s(j)=i} \gamma_{kij}(t) \geq 0$ \\
            & $\gamma_{kij}(t) + \beta_{ki}(t) 
                \geq - G_{kj}\ot{\mu}_j \int_0^t \eta_j(s) \dd s
                ~\forall (i, j:s(j) = i), \; \gamma(t), \beta(t) \ge 0$ \\
        \midrule[0.02\lightrulewidth] \\
        \multirow[c]{2}{1.25cm}{Budgeted} 
            & $\alpha_k + (\oo{\lambda}_k - \ot{\lambda}_k) t
                - \sum\limits_{j} \!\! G_{kj} \oo{\mu}_j
                    \int_0^t  \eta_{j} (s)\dd s 
                + \sum\limits_{i} \!\! \Gamma_i \beta_{ki}(t) 
                + \sum\limits_i \!\sum\limits_{j:s(j)=i} \gamma_{kij}(t) \geq 0$ \\
            & $\beta_{ki}(t) + \gamma_{kij}(t) - 2 \delta_{kij}
                \geq - G_{kj}\ot{\mu}_j \int_0^t \eta_j(s) \dd s$,  \\
            & $\delta_{kij}(t)  +  G_{kj}\ot{\mu}_j \int_0^t \eta_j(s) \dd s \ge 0
               ~\forall (i, j:s(j) = i), \; \gamma(t), \delta(t), \beta(t) \ge 0  $ \\
        \midrule[0.02\lightrulewidth] \\
        \multirow[c]{2}{1.25cm}{Polyhedral}
            & $\alpha_k + (\oo{\lambda}_k - \ot{\lambda}_k) t
                - \sum\limits_{j} \!\! G_{kj} \oo{\mu}_j
                    \int_0^t  \eta_{j} (s)\dd s 
                + \sum\limits_{m} \!\! d_{m} \delta_{km}(t) \geq 0$ \\
            & $- \sum\limits_{m}  D_{mj} \delta_{km}
                = G_{kj} \ot{\mu}_j \int\limits_0^t \eta_{j}(s) \dd s \; \forall j, \; \delta(t) \geq 0$\\
        \bottomrule
    \end{tabular}
    \caption{\Gls{modelB}: Robust counterpart to system dynamics constraints \Cref{eqn:dyn2g}}
    \label{tab:robust-counterpart-equation-balance-b}
\end{table*}


The
objective functionals also depend on uncertain parameters, and thus their robust counterparts should be built w.r.t a specific $f(\cdot), g(\cdot)$ and specific uncertainty set. In the special case, where the objective is to minimize holding costs in the buffers we have $f(x(t)) = c^\transpose x(t)$ and $g(u(t)) = 0$. In this case the robust counterparts of the objective has shape of SCLP objective and hence resulting problem is tractable. Substituting the system dynamic equations (\ref{eqn:dyn1g}) or (\ref{eqn:dyn2g}) into the objective,
one can see that value of the objective functional depends on uncertainty in arrival rates $\lambda$. However, as cost related to the arrival rates independent of our control it is possible to calculate the maximal arrival cost $\Lambda$ independently from the optimization of the fluid model as specified in \Cref{tab:robust_lambda}.

\begin{prop}
    \label{prop:robust-objective-modA}
    Let  $f(x(t))=c\Tt x(t), c \ge 0$ , $g(u(t))=0$ and
        $\Lambda$ taken from \Cref{tab:robust_lambda} then the robust counterpart of objective functional for \gls{modelA} is equivalent to: 
    \begin{align}\label{eqn:robust-counterpart-objective-a}
        \min_{u(t),z(t)} \quad & \int_{0}^{T} z(t) dt \nonumber \\
        s.t. \quad & \Lambda t + c\Tt \alpha  - \int_0^t c\Tt G u (s)\dd s \le z(t), \forall t \text{,} 
    \end{align}
\end{prop}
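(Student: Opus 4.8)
The plan is to eliminate the state trajectory $x(\cdot)$ using the system dynamics, isolate the part of the objective that depends on the uncertain arrival rates, maximize that part over the uncertainty set, and then rewrite the resulting worst-case objective in epigraph form; only the objective functional is at issue here, since robust feasibility of \Cref{eqn:dyn1g} and of $x(t)\ge 0$ is supplied separately by \Cref{prop:balance-model-a}. First I would substitute the balance equation \Cref{eqn:dyn1g}, with its arrival term read under the uncertainty model of \Cref{subsec:Uncertainty-model} as $\int_0^t\lambda_k(s)\dd s$ for $\lambda_k(s)=\oo\lambda_k+\ot\lambda_k\zeta_k(s)$, into the objective. With $f(x(t))=c\Tt x(t)$, $g\equiv 0$, and $x_k(t)=\alpha_k+\int_0^t\lambda_k(s)\dd s-\sum_j\int_0^t G_{kj}u_j(s)\dd s$, the objective becomes
\[
 \int_0^T\!\Bigl(c\Tt\alpha+{\textstyle\sum_k} c_k\!\int_0^t\!\lambda_k(s)\dd s-\int_0^t c\Tt G u(s)\dd s\Bigr)\dd t,
\]
in which only the middle term carries uncertainty and, moreover, it does not depend on the control $u$.

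Next I would carry out the worst-case maximization over the perturbations. Since $c\ge 0$ and $\ot\lambda\ge 0$, the instantaneous arrival-cost rate $\sum_k c_k\lambda_k(s)=\sum_k c_k\oo\lambda_k+\sum_k c_k\ot\lambda_k\zeta_k(s)$ is maximized pointwise in $s$ by pushing $\zeta(s)$ to the boundary of the uncertainty set, and for each of the four shapes this reduces to a static linear program in $\zeta(s)$ whose optimal value is independent of $s$ and equals the constant $\Lambda$ of \Cref{tab:robust_lambda} (box: raise every coordinate; (one-sided) budgeted: spend the budget $\Gamma$ on the coordinates with the largest $c_k\ot\lambda_k$; polyhedral: solve $\max\{c\Tt(\oo\lambda+\ot\lambda\circ\zeta):D\zeta+d\ge 0\}$). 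Because one fixed, time-constant worst-case perturbation simultaneously maximizes $\sum_k c_k\int_0^t\lambda_k(s)\dd s$ for \emph{every} $t$, the maximization commutes with the outer integral $\int_0^T\!\cdot\,\dd t$, so the robust objective equals $\min_u\int_0^T\!\bigl(c\Tt\alpha+\Lambda t-\int_0^t c\Tt G u(s)\dd s\bigr)\dd t$.

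Finally I would introduce an epigraph variable $z(t)$ constrained by $z(t)\ge c\Tt\alpha+\Lambda t-\int_0^t c\Tt G u(s)\dd s$, which is exactly the constraint of \Cref{eqn:robust-counterpart-objective-a}, and minimize $\int_0^T z(t)\dd t$; as the optimal $z$ meets this bound with equality, the two problems share both optimal value and minimizing $u$, giving the asserted equivalence. The step I expect to be the main obstacle is the commutation of $\max_\zeta$ with $\int_0^T\!\cdot\,\dd t$: one must argue that for the \emph{dynamic} uncertainty sets — perturbations chosen freely at each instant subject to per-instant constraints — the pointwise worst case is attained along a single perturbation trajectory optimal uniformly in $t$, so that $\int_0^T\max_\zeta c\Tt x(t)\dd t=\max_\zeta\int_0^T c\Tt x(t)\dd t$. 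Monotonicity of $c\Tt x(t)$ in $\lambda$ together with the per-instant separability of the uncertainty sets is what makes this work, and it is also what legitimizes computing $\Lambda$ offline, independently of the fluid optimization.
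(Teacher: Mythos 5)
Your proposal is correct and follows essentially the same route as the paper, which (deferring to the argument of \Cref{prop:balance-model-a}) substitutes the dynamics into the objective, observes that the uncertain arrival cost is independent of the control so its worst case $\Lambda$ can be computed offline as in \Cref{tab:robust_lambda}, and then writes the result in epigraph form with $z(t)$. Your explicit justification that a single time-constant worst-case perturbation attains the pointwise maximum uniformly in $t$ (so the maximization commutes with the outer time integral) is a point the paper leaves implicit, and your primal description of $\Lambda$ matches the dual LPs of \Cref{tab:robust_lambda} by LP duality.
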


\begin{table}[phtb]
    \centering
    \begin{tabular}{ll}
        \toprule
        Uncertainty set & Maximal arrival cost $\Lambda$ \\
        \midrule
        Box & $\Lambda = c\Tt(\oo{\lambda} + \ot{\lambda})$ \\
        \midrule[0.02\lightrulewidth] \\
        \multirow[c]{2}{1.5cm}{One-sided \& Budgeted}
            & $\Lambda = c\Tt \oo{\lambda} + \min_{\beta^*, \gamma^*} \Gamma^* \beta^* + \sum_k \gamma^*_k$  \\
            & $\beta^*+\gamma_{k}^*- c_k \ot{\lambda}_k\geq 0, \; k= 1,\dots, K,\; \beta^*,\gamma^* \ge 0
                    $\\
        \midrule[0.02\lightrulewidth] \\
        \multirow[c]{2}{1.5cm}{Polyhedral}
            & $ \Lambda = c\Tt \oo{\lambda} - \max_{\delta^*} \sum\limits_{m} \delta^*_{m} d_m,$ \\
            & $ D\Tt \delta^* = - c_k \ot{\lambda}_k , \; k= 1,\dots, K, \;\delta^* \geq 0$ \\
        \bottomrule
    \end{tabular}
    \caption{Robust arrival rates}
    \label{tab:robust_lambda}
\end{table}

  \begin{table*}[htbp] 
     \centering
     \begin{tabular}{p{1.5cm}l}
         \toprule
         Uncertainty & Robust Counterpart \\
         \midrule
         Box & $z(t)\ge \Lambda t + \sum_k c_k
             \left[ \alpha_k - 
                 \!\!  \sum\limits_{j: k(j)=k} \!\! \int_0^t (\oo{\mu}_j - \ot{\mu}_j) 
            \eta_{j} (s)\dd s
            + \sum\limits_{j:k(j) \ne k} \!\! \int_0^t G_{kj} (\oo{\mu}_{j} + \ot{\mu}_{j}) \eta_{j}(s) \dd s \right] $ \\
         One-sided & $z(t)\ge \Lambda t + \sum_k c_k
             \left[ \alpha_k  - 
                 \!\! \int_0^t \sum_j G_{kj} \oo{\mu}_j \eta_j (s)\dd s +
                     \sum_i \Gamma_i \beta_{ki}(t) + \sum\limits_i \!\sum\limits_{j:s(j)=i} \gamma_{kij}(t) \right] $ \\
                & $\gamma_{kij}(t) + \beta_{ki}(t) 
                \geq G_{kj}\ot{\mu}_j \int_0^t \eta_j(s) \dd s
                ~\forall (i, j:s(j) = i), \; \gamma_{kij}(t), \beta_{ki}(t) \ge 0$ \\  
        Budgeted & $z(t)\ge \Lambda t + \sum_k c_k
             \left[ \alpha_k  -  
                 \!\! \int_0^t \sum_j G_{kj} \oo{\mu}_j \eta_j (s)\dd s +
                     \sum_i \Gamma_i \beta_{ki}(t) + \sum\limits_i \!\sum\limits_{j:s(j)=i} \gamma_{kij}(t) \right]$ \\
                & $\beta_{ki}(t) + \gamma_{kij}(t) - 2 \delta_{kij}
                \geq G_{kj}\ot{\mu}_j \int_0^t \eta_j(s) \dd s$, \\
                & $\delta_{kij}(t)  -  G_{kj}\ot{\mu}_j \int_0^t \eta_j(s) \dd s \ge 0
               ~\forall (i, j:s(j) = i), \; \gamma(t), \delta(t), \beta(t) \ge 0  $ \\
         Polyhedral & $z(t)\ge \Lambda t + \sum_k c_k
             \left[ \alpha_k  -  
                 \!\! \int_0^t \sum_j G_{kj} \oo{\mu}_j \eta_j (s)\dd s  - \sum\limits_{m} \!\! d_{m} \delta_{km}(t) \right] $ \\
               & $\sum\limits_{m} D_{mj} \delta_{km}
                = G_{kj} \ot{\mu}_j \int\limits_0^t \eta_{j}(s) \dd s  \; \forall j, \; \delta(t) \geq 0$ \\   
         \bottomrule
     \end{tabular}
     \caption{Robust objective functions for \gls{modelB}}
     \label{tab:robust-objective-functions}
 \end{table*}

\begin{prop}
 \label{prop:robust-objective-modB}
    Let  $f(x(t))=c\Tt x(t), c \ge 0$ , $g(u(t))=0$ then the robust counterpart of objective functional for \gls{modelB} is equivalent to: 
    \begin{align*}
        \min_{\eta(t),z(t)} \quad & \int_{0}^{T} z(t) dt \\
        s.t.  &\text{ Constraints: (\Cref{tab:robust-objective-functions})}
    \end{align*}
\end{prop}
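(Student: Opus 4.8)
The plan is to use the standard epigraph-plus-duality route for robust objectives, mirroring the derivation of the robust counterpart of the \gls{modelB} system dynamics (\Cref{prop:balance-model-b}) but with the direction of the worst case reversed, since here the adversary wants $c\Tt x(t)$ to be \emph{large} rather than $x_k(t)$ to be small. I would first introduce the epigraph function $z(t)$ and replace $\min_{\eta}\int_0^T c\Tt x(t)\dd t$ by $\min_{\eta,z}\int_0^T z(t)\dd t$ with the semi-infinite constraint $z(t)\ge c\Tt x(t)$ for every $t$ and every realization of the uncertain parameters. Substituting the dynamics \Cref{eqn:dyn2g} gives
\[
 c\Tt x(t) \;=\; \sum_k c_k\!\left[\alpha_k + \lambda_k t - \int_0^t \sum_j G_{kj}\,\mu_j(s)\,\eta_j(s)\dd s\right],
\]
so the uncertainty enters through the control-independent arrival rates $\lambda_k$ and through the service rates $\mu_j$ that multiply the control $\eta_j$. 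Because the two groups of parameters carry independent perturbations, the inner maximization separates: the $\lambda$-part contributes the term $\Lambda t$, with $\Lambda$ exactly the quantity already computed in \Cref{prop:robust-objective-modA} and \Cref{tab:robust_lambda}, which I would invoke rather than rederive.

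The core step is to represent the worst case of the $\mu$-dependent term tractably. Writing $\mu_j(s)=\oo{\mu}_j-\ot{\mu}_j\zeta_j(s)$, it splits into the nominal part $-\int_0^t\sum_j G_{kj}\oo{\mu}_j\eta_j(s)\dd s$ and the perturbation part $\int_0^t\sum_j G_{kj}\ot{\mu}_j\zeta_j(s)\eta_j(s)\dd s$. Two facts make this manageable: (i) since $\eta_j(s)\ge 0$ and the uncertainty set constrains $\zeta(s)$ pointwise in $s$, the maximization over the perturbation path can be pushed inside the time integral; and (ii) for a fixed instant the resulting inner problem is a linear program over a box, a (one-sided) budgeted polytope, or a general polyhedron $\{D\zeta+d\ge 0\}$, so LP duality turns it into the forms listed in \Cref{tab:robust-objective-functions}, with the dual multipliers promoted to time functions $\beta_{ki}(t),\gamma_{kij}(t),\delta_{km}(t)$ that become additional decision variables. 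For the box set the dual is explicit: substitute the worst extreme value of each $\mu_j$ — $\oo{\mu}_j-\ot{\mu}_j$ on the flow-out classes ($k(j)=k$, which the adversary shrinks to retain fluid in buffer $k$) and $\oo{\mu}_j+\ot{\mu}_j$ on the flow-in classes ($k(j)\ne k$) — which is precisely the box row of the table and the mirror image of the box case of \Cref{prop:balance-model-b}; for the one-sided, budgeted and polyhedral sets the dual yields the stated extra inequalities. Finally I would note that every constraint obtained is linear in $(\eta,z,\beta,\gamma,\delta)$ and piecewise-linear in $t$ while the objective is $\int_0^T z(t)\dd t$, so the problem is again an SCLP and hence tractable.

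The step I expect to be the main obstacle is the bookkeeping around the bilinear term $\mu_j(s)\eta_j(s)$: one has to justify exchanging the maximization over the whole perturbation path with the time integral — relying on $\eta\ge 0$ and on the uncertainty set being imposed separately at each instant — and then track the sign of $G_{kj}$ carefully, keeping the flow-out classes ($G_{kj}=1$) and the flow-in classes ($G_{kj}=-p_{j,k}$) apart, because the adversary drives $\mu_j$ to opposite ends of its interval in the two cases (the reverse of what happens when robustifying $x_k(t)\ge 0$). Everything past that is a routine per-uncertainty-set LP-duality computation of the kind already carried out for \Cref{prop:balance-model-b}.
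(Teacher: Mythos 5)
Your proposal is correct and follows essentially the same route the paper intends: the paper states that the proof of \Cref{prop:robust-objective-modB} mirrors that of \Cref{prop:balance-model-b}, i.e.\ epigraph reformulation with $z(t)$, separating the control-independent arrival cost $\Lambda$ via \Cref{tab:robust_lambda}, exchanging maximization with the time integral (valid since $\eta\ge 0$ and the perturbations are constrained pointwise in $t$), and per-uncertainty-set LP duality with the worst-case direction reversed relative to the balance constraints. Your explicit tracking of the sign of $G_{kj}$ (adversary shrinking $\mu_j$ on outflow classes and enlarging it on inflow classes) is exactly the intended adaptation, so no gap.
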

The proofs of \Cref{prop:robust-objective-modA} and \Cref{prop:robust-objective-modB} are very similar to those of \Cref{prop:balance-model-a} and \Cref{prop:balance-model-b}, respectively.

Based on \Cref{prop:balance-model-a,prop:server-model-a,prop:robust-objective-modA}  the robust counterpart of \gls{modelA} is: 
\begin{equation}
\label{eqn:robust-modA}
\begin{array}{ll}    
        \displaystyle \min_{u(t),z(t)} & \int_0^T z(t) dt \\
      s.t. & \text{Constraints:} \text{ \Cref{eqn:robust-counterpart-objective-a,eqn:robust-counterpart-equation-balance-a}, \Cref{tab:robust-counterpart-equation-server-capacity-a}}, \\
        & u(t)\ge 0, 
 \end{array}       
\end{equation}
Based on \Cref{prop:balance-model-b,prop:robust-objective-modB} the robust counterpart of \gls{modelB} is: 
\begin{equation}
\label{eqn:robust-modB}
\begin{array}{ll}
   \displaystyle  \min_{\eta(t),z(t)} & \int_0^T z(t) dt  \\
     s.t. & \text{Constraints: } \text{\Cref{tab:robust-objective-functions,tab:robust-counterpart-equation-balance-b}, Eq. \ref{eqn:eff2g}}, \\
    & \eta(t) \ge 0 
 \end{array}       
\end{equation}

\subsection{Processing-rates model vs Server-effort model}
\label{subsec:compare}
The \gls{modelA} represents a case where the suitable control of the system at hand is its class specific processing rates $u_j(t)$ while the \gls{modelB} is used to describe cases where system is controlled by the proportion of resources dedicated to specific job classes $\eta_j(t)$. When no uncertainty affects the models, they are equivalent. In other words, any solution of the \gls{modelA} can be transformed into a solution of the \gls{modelB} using the relation $\eta(t) = \tau(t) u(t)$. In particular, if $u^*_j(t)$ is an optimal solution for the \gls{modelA}, then  $\eta^*(t) = \tau(t) u^*(t)$ is an optimal solution for the \gls{modelB}.
When uncertainty is introduced into the models, this is no longer possible in general. If $u_j(t)$ is a robust solution for the \gls{modelA} (e.g. of Problem (\ref{eqn:robust-modA})), then the value of $\eta(t) = \tau(t) u(t)$ depends on the uncertain parameters $\tau_j(t)$. However, we need a certain and feasible transformation to implement this solution as a control for the \gls{modelB} ((e.g. of Problem (\ref{eqn:robust-modB})). This applies also to the robust optimal solutions of the models.


Let us assume there exists a transformation $h:u(t)\to \eta(t)$ between robust solutions of \gls{modelA} and \gls{modelB}. That is to say, if $u(t)$ is a feasible solution of Problem (\ref{eqn:robust-modA})), then $h:u(t)\to \eta(t)$ is a feasible solution of Problem (\ref{eqn:robust-modB})).
In the following theorem we show that a robust optimal solution of \gls{modelB} is as good or better than any such transformation $h(u)$ of the robust optimal solution of \gls{modelA}.

\begin{thm}
\label{thm:model-better}
    Let $u^*(t)$ be an optimal solution of Problem (\ref{eqn:robust-modA}).
    Let $h$ be a transformation from the solution space of Problem (\ref{eqn:robust-modA})
    to the solutions space of Problem (\ref{eqn:robust-modB}),
    and let $\eta^*(t) = h(u^*(t))$.
    Then the optimal objective value of Problem (\ref{eqn:robust-modB}) is
    at least as good as that of $\eta^*(t)$.
\end{thm}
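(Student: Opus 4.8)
The plan is to reduce the theorem to the elementary observation that the optimal value of a minimization problem can only improve on the value attained at any single feasible point; the actual content lies entirely in checking that $\eta^*(t)=h(u^*(t))$ is such a feasible point and in making precise what ``the objective value of $\eta^*(t)$'' means.

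First I would record that $u^*(t)$, being optimal for Problem~(\ref{eqn:robust-modA}), is a fortiori feasible for it: it satisfies \Cref{eqn:robust-counterpart-equation-balance-a}, the constraints of \Cref{tab:robust-counterpart-equation-server-capacity-a}, and $u^*(t)\ge 0$, and it comes with an accompanying $z(t)$ satisfying \Cref{eqn:robust-counterpart-objective-a}. I would then invoke the standing hypothesis on $h$ stated in the paragraph preceding the theorem, namely that $h$ carries feasible solutions of Problem~(\ref{eqn:robust-modA}) to feasible solutions of Problem~(\ref{eqn:robust-modB}). Applied to $u^*$, this yields that $\eta^*(t)=h(u^*(t))$ is feasible for Problem~(\ref{eqn:robust-modB}): it satisfies \Cref{eqn:eff2g} and $\eta^*(t)\ge 0$, and there exist auxiliary variables --- a $z(t)$ together with the multipliers $\beta,\gamma,\delta$ appearing in \Cref{tab:robust-counterpart-equation-balance-b,tab:robust-objective-functions} --- completing $\eta^*$ to a feasible point of that problem.

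Next I would pin down the objective value of the control $\eta^*$: it is the infimum of $\int_0^T z(t)\,\dd t$ over all $z(t)$ and all accompanying multipliers satisfying the constraints of Problem~(\ref{eqn:robust-modB}) with $\eta$ fixed to $\eta^*$. Since for each fixed $t$ those constraints merely bound $z(t)$ from below --- after a pointwise minimization over the multipliers $\beta,\gamma,\delta$, which for a.e.\ $t$ is a linear program in finitely many variables that is feasible and bounded below by $0$ --- the pointwise-minimal feasible $z(t)$ is a measurable, bounded function of $t$ on $[0,T]$, so the infimum is attained and equals $\int_0^T z_{\eta^*}(t)\,\dd t =: V_B(\eta^*)$. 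Finally, because Problem~(\ref{eqn:robust-modB}) is a minimization and the tuple $(\eta^*, z_{\eta^*}, \beta, \gamma, \delta)$ is one of its feasible points, the optimal objective value of Problem~(\ref{eqn:robust-modB}) is no larger than $V_B(\eta^*)$, i.e.\ at least as good as that of $\eta^*(t)$; this is exactly the assertion.

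The only genuine work is the bookkeeping in the middle step: confirming that feasibility of the control $\eta^*$ does guarantee, block by block, the existence of compatible multipliers in every row of \Cref{tab:robust-counterpart-equation-balance-b} and \Cref{tab:robust-objective-functions}, and that the pointwise minimization over those multipliers admits a measurable selection so that $V_B(\eta^*)$ is well defined. No worst-case or duality argument is needed beyond what is already established in \Cref{prop:balance-model-b,prop:robust-objective-modB}; in particular the proof never compares $V_B(\eta^*)$ with the optimal value of Problem~(\ref{eqn:robust-modA}), only with the optimum of Problem~(\ref{eqn:robust-modB}). I note in passing that optimality of $u^*$ is not used --- feasibility of $u^*$ would suffice --- but I keep the statement as given.
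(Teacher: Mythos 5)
Your proof is correct and follows essentially the same route as the paper's: the paper's (one-line) argument is precisely that $\eta^*=h(u^*)$ is by hypothesis a feasible point of Problem~(\ref{eqn:robust-modB}), so the minimum of that problem can be no worse than the value attained at $\eta^*$. Your additional bookkeeping about the auxiliary variables $z,\beta,\gamma,\delta$ and the measurable pointwise-minimal $z(t)$ only makes explicit what the paper leaves implicit, and your remark that feasibility of $u^*$ suffices is likewise consistent with the paper's argument.
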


\begin{proof}
    $\eta^*(t) = h(u^*(t))$ where $u^*(t)$ is the optimal solution of Problem (\ref{eqn:robust-modA}),
    must be a feasible solution of Problem (\ref{eqn:robust-modA}). However, it is not necessarily  optimal.
\end{proof}
Note that \Cref{thm:model-better} applies to all uncertainty sets and objective functionals.



\section{Computational Results}
\label{sec:results}

\Cref{thm:model-better} indicates that the optimal robust solution of the \gls{modelB} is as good or better than the transformed optimal robust solution of \gls{modelA}.
In this section we perform a numerical study 
on two example systems to quantify how much optimal robust solution of \gls{modelB} outperforms the transformed optimal robust solution of \gls{modelA} in terms of holding cost. These systems have 10 and 20 servers respectively, with 10 job classes processed on each server and no internal inflows in order to have straightforward transformation rules.
The two models being compared handle uncertainty in arrival rates in the same manner,
Therefore we opted to simplify our example by looking only at uncertain service times $\tau(t)$. We consider a box uncertainty set $\tau(t) = \oo\tau +\epsilon\oo{\tau}\zeta(t)$ where  $\epsilon$ denotes the maximal deviation of actual service time $\tau(t)$ from its nominal value $\oo{\tau}$.
Now $\mu(t) = 1/\tau(t)$,
so $\frac{1}{(1+\epsilon)\oo{\tau}_j} \leq \mu_j(t) \leq \frac{1}{(1-\epsilon)\oo{\tau}_j}$. Solving for the midpoint of this box
gives us the equivalent uncertainty set $\oo{\mu} - \epsilon \oo{\mu}\zeta(t)$ centered around $\oo{\mu}_j = \frac{1}{\oo{\tau}_j(1-\epsilon^2)}$.

Parameter values for the example system are taken randomly from the values in \Cref{tab:results:parameters}.

\begin{table}[h!bt]
    \centering
    \begin{tabular}{ccc}
        \toprule
         Name& Symbol & Range \\
         \midrule
         nominal service rate & $\oo{\mu}_j = 1/\oo{\tau}_j$ & 5 - 25 \\
         nominal arrival rate &$\lambda_k$ & 2 - 5 \\
         initial buffer size&$\alpha_k$ & 10 - 20 \\
         holding cost&$c_k$ & 1 - 2 \\
         \bottomrule
    \end{tabular}
    \caption{Experimental parameters used in simulations.
        Values were randomly sampled from the given range.}
    \label{tab:results:parameters}
\end{table}

The general approach is to construct the robust counterparts for both a \gls{modelA} and a \gls{modelB} (meaning, Problems (\ref{eqn:robust-modA}), (\ref{eqn:robust-modB}) respectively) for the same
randomized set of parameters $\tau=1/\mu$, $\lambda$, $\alpha$, $c$,
Next, we solve these two SCLP problems to obtain their optimal robust controls $u^*(t)$ and $\eta^*(t)$.
We then create ten realizations of $\tau(t)=[\tau_1(t),\ldots,\tau_K(t)]$ using \Cref{eqn:results:tau_t} for each of the ten sets of random model parameters, for a total of 100 realizations of $\tau(t)$.
\Cref{fig:uncertain_arrival_times} depicts several examples of such realizations.
\begin{equation}
    \label{eqn:results:tau_t}
    \tau_k(t) = \tau_k + \frac{1}{4} \sum_{n=1}^4 {\sin(n \pi t + \phi_n), ~\phi_n \sim U[0,2\pi]}
\end{equation}
For each realization, we can compute the actual buffer sizes $x_k(t)$ in the system when applying the robust optimal control $\eta^*(t)$ and their sizes when applying the transformed control $\eta^{u}_j(t) = u^*_j(t)/(1-\tau_j)$.

These actual buffer sizes determine the actual holding cost for both these controls. Lastly, we average the difference between those objective values over all 100 realizations.
We repeat this procedure ten times, each time using a new set of random parameters, and compute the average difference between objective values over the ten parameters sets.

We ran this experiment for two different sized networks:
a) a smaller network with $I=10$ servers and $J=100$ job classes,
and b) a larger network with $I=20$ servers and $J=200$ job classes.
In both cases, the networks have no internal inflows,
meaning that $J=K$ and $G_{kj}=0$ when $j \neq k$.
For each of network,
we ran the experiment for five values of relative uncertainty: $\epsilon \in \{0.01,0.02,0.05,0.1,0.2\}$.
\Cref{alg:results:method} describes in detail the experimental flow.

\begin{algorithm}[h!t]
\caption{Method for numerical experiment}
\label{alg:results:method}
\begin{algorithmic}
\Require $I,J,K,\epsilon$
\For{$n_p = 1,\ldots,10$} \algorithmiccomment{random models}
    \State random $\tau=1/\mu$, $\lambda$, $\alpha$, $c$ \algorithmiccomment{from \Cref{tab:results:parameters}}
    \State $u^*(t) \gets$ \gls{modelA} using $(1+\epsilon)\tau$
    \State \qquad \algorithmiccomment{from \Cref{tab:robust-counterpart-equation-server-capacity-a}}
    \State $\eta^*(t) \gets$ \gls{modelB} using $[\tau(1-\epsilon)]^{-1}$
    \State \qquad \algorithmiccomment{worst case of $\mu$ for $\tau(t) \geq (1-\epsilon)\tau$}
    \For{$n_t = 1, \ldots, 10$} \algorithmiccomment{perturbations of $\tau$}
        \State $\tau(t) \gets \mbox{rand}(\tau)$ \algorithmiccomment{explained below}
        \State \textbf{\Gls{modelA}:}
        \State $\eta^{u}(t) \gets u^*(t) \tau (1-\epsilon)$ \algorithmiccomment{transformation of $u^*(t)$}
        \State $\hat{x}_{1} \gets \alpha + \lambda t - \int_0^t \eta^{u}(s)/\tau(s) \dd s$
        \State $z_1 \gets \int_0^T c^\transpose \hat{x}_{1}(t) \dd t$
        \State \textbf{\Gls{modelB}:}
        \State $\hat{x}_{2} \gets \alpha + \lambda t - \int_0^t \eta^*(s)/\tau(s) \dd s$
        \State $z_2 \gets \int_0^T c^\transpose \hat{x}_{2}(t) \dd t$
        \State \textbf{Relative difference:}
        \State $\Delta_{12} \gets (z_1 - z_2)/z_1$ \algorithmiccomment{relative difference}
    \EndFor
\EndFor
\end{algorithmic}
\end{algorithm}

Results,
which are summarized in \Cref{tab:results:mean_improvement} 
indicate that the relative improvement of robust control of \gls{modelB} over control derived from robust control of \gls{modelA} increases with the level of uncertainty.
The results also suggest that this improvement size is independent of network size.
{\em Note:} The computational costs of the the new model will be explored in a subsequent paper.
The python code for running the experiments can be found in \cite{Ship_SCLP-Python_2022},
a fork of \cite{Shindin_SCLP-Python_2021}.

\begin{figure}[bt]
    \centering
    \vspace{-1em}
    \begin{subfigure}[h]{0.48\columnwidth}
        \centering
        \includegraphics[width=\textwidth]{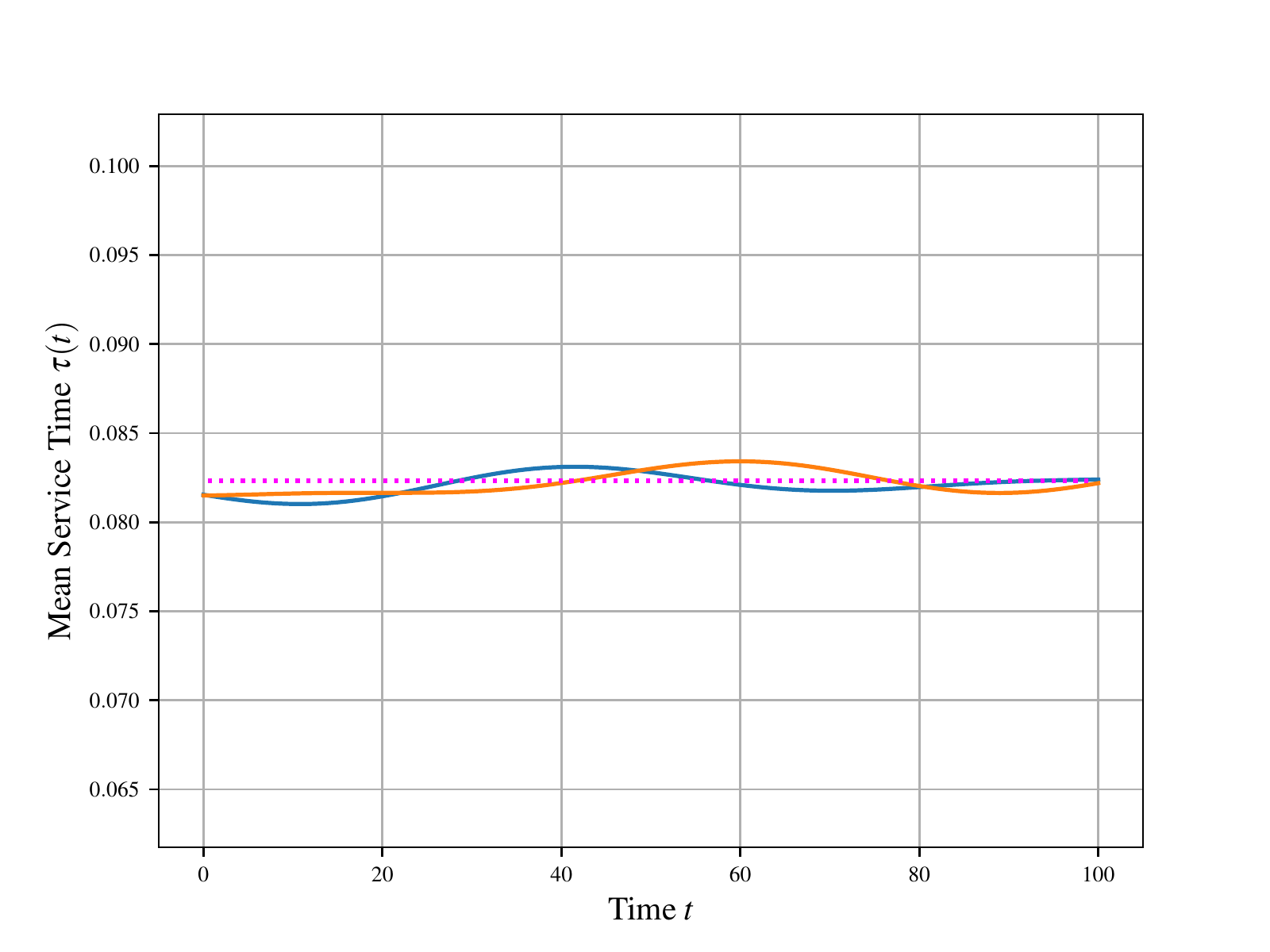}
        \caption{$\epsilon=0.02$}
        \label{fig:uncertain_arrivals:0.02}
    \end{subfigure}
    \hfill
    \begin{subfigure}[h]{0.48\columnwidth}
        \centering
        \includegraphics[width=\textwidth]{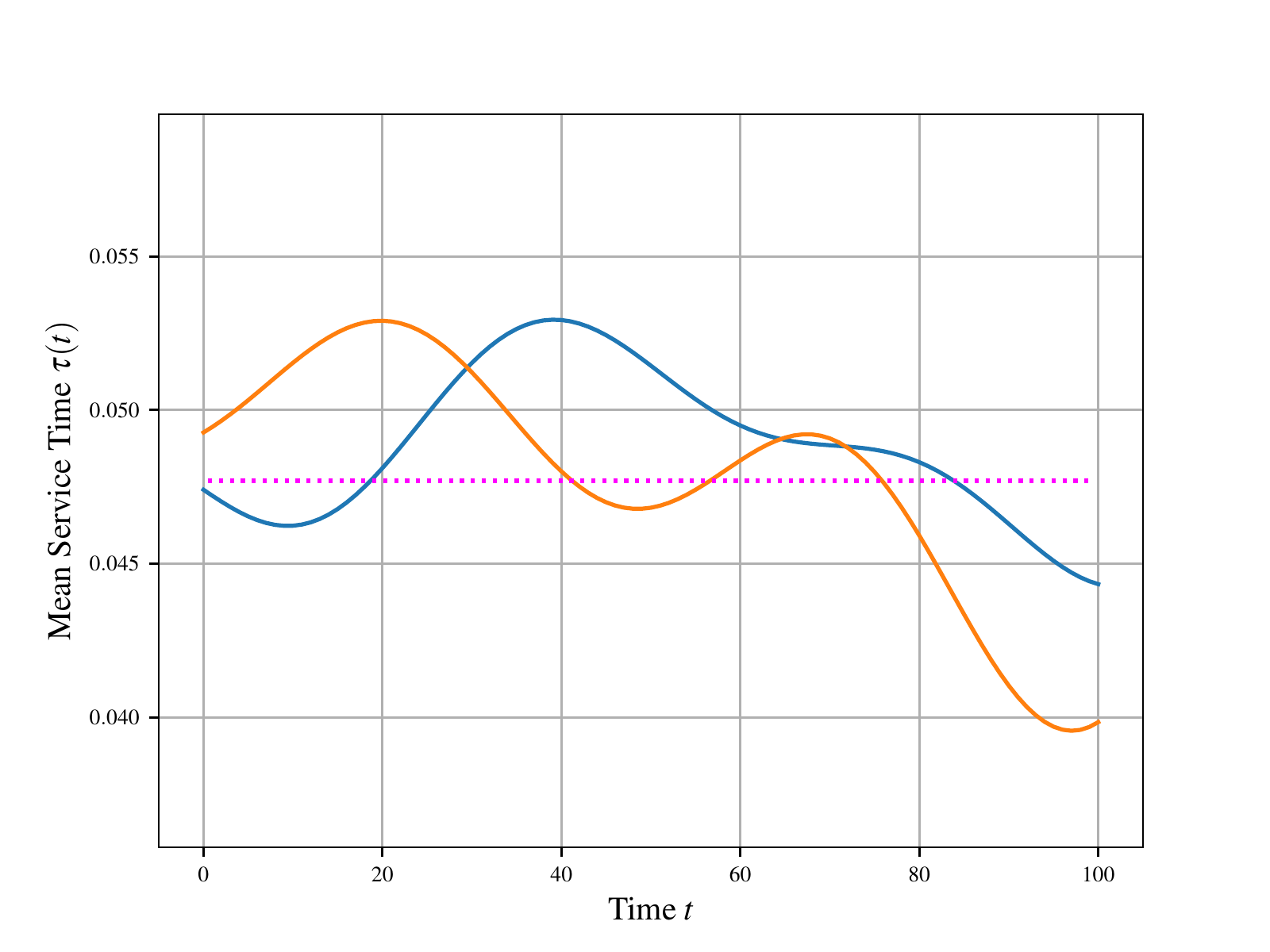}
        \caption{$\epsilon=0.2$}
        \label{fig:uncertain_arrivals:0.2}
    \end{subfigure}
    \caption[Examples of $\tau(t)$ realizations]{Examples of realizations of $\tau(t)$ for different values of $\epsilon$. Dotted lines indicate value of $\tau$.}
    \label{fig:uncertain_arrival_times}
\end{figure}

%
%
\begin{table}[bt]
    \centering
    \begin{subtable}{0.49\columnwidth}
        \centering
        \begin{tabular}{cc}
        \toprule
        $\epsilon$ & Improvement $\Delta_{12}$ (\%) \\
        \midrule
              0.01 &             1.42 \\
              0.02 &             2.93 \\
              0.05 &             6.76 \\
              0.10 &            11.76 \\
              0.20 &            19.56 \\
        \bottomrule
        \end{tabular}
        \caption{For 10 servers, 100 flows}
    \end{subtable}
    \begin{subtable}{0.49\columnwidth}
        \centering
        \begin{tabular}{cc}
        \toprule
        $\epsilon$ & Improvement $\Delta_{12}$ (\%)  \\
        \midrule
          0.01 &             1.47 \\
          0.02 &             2.89 \\
          0.05 &             6.91 \\
          0.10 &            12.04 \\        
          0.20 &            19.29 \\
        \bottomrule
        \end{tabular}
        \caption{For 20 servers, 200 flows}
    \end{subtable}
    \caption[Mean relative improvement of \gls{modelB}]{Mean relative percent improvement of the \gls{modelB} model over the \gls{modelA}.}
    \label{tab:results:mean_improvement}
\end{table}

%
%

\section{Discussion and Conclusion}
\label{sec:conclusion}

\subsection{Discussion}
In this research we have elicited a new robust model for Fluid Processing Network problems with control over
the proportion of server-effort dedicated to serving different flows.
Our primary contribution is to prove that this model is better than the transformed  \gls{modelA}
of \cite{bertsimas2014robust} for networks whose control is the proportion of server effort on each class.

Our second contribution is extending both models to cover box, budgeted, 
one-sided budgeted and polyhedral uncertainty sets.
These shapes have the advantage that they are less conservative than box uncertainty.
Robust optimization problems with budgeted uncertainty occur frequently, 
since there may be interdependence between parameter values.
Uncertainty set modeling may be based on observed values of the uncertain parameters.
In this case,
one may decide to model the uncertainty set as the convex hull of these observations,
which corresponds to polyhedral uncertainty.

Finally,
our third contribution is to quantitatively demonstrate, using numerical experiments, how our new model outperforms the current \gls{modelA} of \cite{bertsimas2014robust} for server-effort controls.
In our experiments, the relative improvement of our model increased with the magnitude of the uncertainty,
although we did not see any difference for models of different sizes.

\subsection{Limitations}
\label{subsec:limitations}
While our model improves on the \gls{modelA} when control is over server effort,
it does not generalize to standard fluid processing networks,
where 
the rates of flow are under control so the model of
\cite{bertsimas2014robust} is more natural.
Our computational results covered a limited subset of possible fluid processing networks,
in particular those for which the actual controls are server-effort proportion.
Next, while we were able to formulate tractable robust counterparts for several classes of uncertainty sets,
there are other useful types of uncertainty sets which we did not handle.
Bertsimas, Brown and Caramanis \cite{Bertsimas_Brown_Caramanis_2011} describe ellipsoidal,
cardinality constrained, and general norm uncertainty sets, for which we know of no robust formulation of SCLP.
Lorca and Sun \cite{Lorca_Sun_2015} describe dynamic uncertainty sets which are not supported by the SCLP-simplex algorithm which currently supports only a constant matrix.
Finally, our computational results covered only box uncertainty sets.
This is due to a limitation in the solver \cite{Shindin_SCLP-Python_2021}
in handling degenerate problems.

\subsection{Future work}
In this paper,
we assume that uncertainty set of our robust model is known.
We would like to study the statistical properties of the network state and controls by considering the stochastic nature of the uncertain parameters estimation.
The solver \cite{Shindin_SCLP-Python_2021} used in this work does not handle degenerate problems.
We would like to extend this solver to handle degenerate problems to solve robust counterparts of problems with additional shapes of uncertainty sets.
In this paper,
we solved robust SCLP problems with constant arrival and service rates.
We plan to extend the theory of SCLP and the solver \cite{Shindin_SCLP-Python_2021}
to include piece-wise constant rates. 

\bibliography{references}
\bibliographystyle{IEEEtran}

%
%
\appendix

\section{Proofs}
\label{appendix:proofs}

\begin{proof}[\Cref{prop:balance-model-a}]
    \label{prf:balance-model-a}
    For all mentioned uncertainty shapes, we note that each individual system dynamics constraint contains only single uncertain parameter,
    so all uncertainty sets could be considered as box uncertainty related to this parameter.
    For any buffer $k$ and for every realization of $\xi$ ,
    the balance constraint (\ref{eqn:dyn1g})
    is:
    \begin{equation}
        \label{eqn:balance-bertsimas}
            \alpha_k + \left( \oo{\lambda}_k + \ot{\lambda}_k\xi_k(t) \right) t
                -  \sum_{j} \!\! \int_0^t G_{kj} u_{j}(s) \dd s 
                \geq 0 ~ \forall t \text{.}            
    \end{equation}

    We are going to express \Cref{eqn:balance-bertsimas} in terms of fixed parameters,
    while including all cases of uncertainty.
    For box uncertainty,
    $|\xi_k(t)| \le 1$ means that
    $\oo{\lambda}_k + \ot{\lambda}_k\xi_k(t) \ge \oo{\lambda}_k - \ot{\lambda}_k ~\forall t$.
    So the reformulated constraint
    \begin{equation*}
               \alpha_k + \left( \oo{\lambda}_k - \ot{\lambda}_k \right) t
                               +  \sum_{j} \int_0^t G_{kj} u_{j}(s) \dd s 
                \geq 0 ~ \forall t, k  \text{.} 
    \end{equation*}
    covers all realizations of uncertainty.
\end{proof}

\begin{proof}[\Cref{prop:server-model-a}]
    \label{prf:server-model-a}
    For box uncertainty,
    $|\zeta_j(t)| \le 1 ~ \forall t, i$ implies that
    $\left( \oo{\tau}_j + \ot{\tau}_j \zeta_j(t) \right) \le
        \left( \oo{\tau}_j + \ot{\tau}_j \right) ~\forall t, j$
    and hence the robust counterparts of \Cref{eqn:eff1g} is: 
    $$\sum\limits_{j: s(j)=i} \left( \oo{\tau}_j + \ot{\tau}_j \right) u_j(t) \le 1 $$ 

    The proof for the one-sided budgeted uncertainty can be found in \cite[Theorem 1]{bertsimas2014robust}. For the other types of uncertainty,
    we also follow the method of proof of \cite[Theorem 1]{bertsimas2014robust}.
    The uncertain server capacity constraints (\ref{eqn:eff1g}),
    must hold for all realizations of $\zeta(t)$ within uncertainty set.
    In particular,
    for any $\zeta(t)$ that maximizes LHS of these constraints. This introduces 
    \gls{lp} problems,
    for each server $i$ and for each $t$.

    For budgeted uncertainty set it can be expressed by:
    \begin{eqnarray}
    \label{prob.budgetA}
        \max\limits_{\zeta(t)} & \sum\limits_{j: s(j)=i} \left( \oo{\tau}_j + \zeta_j(t) \ot{\tau}_j \right) u_j(t) \nonumber \\
        s.t. & \sum\limits_{j: s(j)=i}  |\zeta_j(t)| \leq \Gamma_i, \quad  |\zeta_j(t)| \leq 1.
    \end{eqnarray}
    
    To find the dual of (\ref{prob.budgetA}),
    we note that $\sum |\zeta_j(t)| \leq \Gamma_i$ is equivalent to the system
    $-w_j(t) \leq \zeta_j(t) \leq w_j(t)$,
    $\sum w_j(t) \leq \Gamma_i$,
    $w_j(t) \leq 1$,
    $w_j(t) \geq 0$.
    With this,
    we can consider our problem a lower-dimensional representation of a \gls{lp} with added variables $w_j, j=1,\ldots,J$
    \cite[Section 1.3]{BenTalElGhaouiNemirovski+2009}.
    Considering Lagrange multipliers
    $\beta_i(t)$, $\gamma_j(t)$, $\phi_j(t)$, $\nu_j(t)$
    we derive the dual:
    \begin{eqnarray}
        \min\limits_{\beta(t),\gamma(t)} &\quad \Gamma_i \beta_i(t) + \sum\limits_{j: s(j)=i} \left(\oo{\tau}_j u_j(t) + \gamma_j(t) \right)\nonumber \\
        s.t. &\quad -\phi_j(t) - \nu_j(t) + \beta_i(t) + \gamma_j(t)  \geq 0,\nonumber \\
        &\quad \phi_j(t) - \nu_j(t) =  \ot{\tau}_j(t) u_j(t), \nonumber\\
            &\quad \beta_i(t), \gamma_j(t), \phi_j(t), \nu_j(t)
                \geq 0, ~\text{for}~ s(j)=i\text{.}\nonumber
    \end{eqnarray}
     Furthermore, since $\ot{\tau}_j(t) u_j(t) \ge 0$, we can set $\nu_j(t) = 0$ and hence problem could be reduced to:
    \begin{eqnarray}
        \min\limits_{\beta(t),\gamma(t)} &\quad  \Gamma_i \beta_i(t) + \sum\limits_{j: s(j)=i} \left(\oo{\tau}_j u_j(t) + \gamma_j(t) \right) \nonumber \\
        s.t. &\quad  \beta_i(t) + \gamma_j(t)  \geq  \ot{\tau}_j(t) u_j(t), \nonumber \\
                   &\quad \beta_i(t), \gamma_j(t)
                \geq 0, ~\text{for}~ s(j)=i\text{.} \nonumber 
   \end{eqnarray}
    
    For polyhedral uncertainty $D\zeta(t) + d \geq 0$ where $D \in \R^{M \times J}$
    and $d \in \R^{M}$,
    we similarly derive the following dual problem from which the constraints in
    \Cref{tab:robust-counterpart-equation-server-capacity-a} are derived:
    \begin{eqnarray}
        \min\limits_{\delta(t)} &\quad - \sum\limits_{m} d_{m} \delta_m(t) \nonumber \\
        s.t. &\quad \sum\limits_{m} D_{mj} \delta_{m}(t) =   u_{j}(t) \ot{\tau}_{j}, \quad \delta(t) \geq 0 \text{.} \nonumber 
    \end{eqnarray}
    
    These dual problems are equivalent to the constraints in \Cref{tab:robust-counterpart-equation-server-capacity-a}.
\end{proof}

\begin{proof}[\Cref{prop:balance-model-b}]
    \label{prf:balance-model-b}
    For all realizations of uncertainty and all $t, k, j$,
    we have $\oo{\lambda}_k - \ot{\lambda}_k \le \oo{\lambda}_k + \ot{\lambda}_k \xi_k(t) \le \oo{\lambda}_k + \ot{\lambda}_k$,
    and $\oo{\mu}_j - \ot{\mu}_j \le \oo{\mu}_j + \ot{\mu}_j \zeta_j(t) \le \oo{\mu}_j + \ot{\mu}_j$.
    Our constraint must hold for all values of uncertain parameters,
    in particular those that minimize the buffer quantity.
    The result for box uncertainty is therefore clear.
    
    For the other types of uncertainty,
    let $S$ be the compact set of possible values of $\zeta(s)$.
    Then we can exchange the order of integration and maximization.
    Furthermore,
    $\max\limits_{s \in \mathcal{S}} \zeta(s) = \max \zeta(t)$
    and since $G_{kj}$ and $\ot{\mu}_j$ are constants,
    and $\eta_j(s) \geq 0$,
    we can extract $\zeta_j(s)$ from inside the integral to $\zeta_j(t)$
    outside,
    and we our constraint is now a linear function of $\zeta_j(t), j=1,\ldots,J$.
    
    For one-sided budgeted uncertainty,
    we note that for inflows,
    the minimum occurs when $\zeta_j(t)=0$.
    Taking the maximum to be the negative of the minimum,
    we derive a separate \gls{lp} subproblem for each server $i$:
    \begin{eqnarray}
        \max\limits_{\zeta(t)} \quad & - \sum\limits_{j:k(j) \ne k}  \zeta_{j}(t)
            \int_0^t G_{kj} \ot{\mu}_{j} \eta_{j}(s) \dd s \nonumber \\
        s.t. \quad 
            & \sum\limits_{j:s(j) = i} \zeta_j(t) \leq \Gamma_i,  \quad 0 \leq \zeta_j(t) \leq 1.\nonumber 
    \end{eqnarray}
    Which has symmetric dual:
    \begin{eqnarray}
       \min_{\beta(t), \gamma(t)} ~ & \sum_i \Gamma_i \beta_{ki}(t)
            + \sum\limits_{i} \sum\limits_{j:s(j)=i} \gamma_{kij}(t) \nonumber  \\
        s.t. ~ & \beta_{ki}(t) + \gamma_{kij}(t) 
            \geq - G_{kj} \ot{\mu}_j
                \int\limits_0^t \eta_j(s) \dd s ~\forall i, j:s(j)=i \nonumber  \\
            & \beta(t), \gamma(t) \geq 0 \nonumber 
    \end{eqnarray}
    For budgeted uncertainty,
    our proof
    uses the same procedure of adding extra variables $w_j(t)$ as the proof of \Cref{prop:server-model-a},
    and creates separate \gls{lp} subproblems for each server as we did for
    one-sides.
    The duals of these have the form:
    \begin{eqnarray}
         \min_{\beta(t),\gamma(t),\delta(t), \nu(t)} \quad & \sum\limits_{i} \Gamma_i \beta_{ki}(t)
            + \sum\limits_{i} \sum\limits_{j:s(j)=i} \gamma_{kij}(t) \nonumber \\
        s.t. \quad & \delta_{ki}(t) - \nu_{kij}(t) = -G_{kj} \ot{\mu}_j
               \int\limits_0^t  \eta_j(s) \dd s ~\forall i, j:s(j)=i \nonumber \\
            & \beta_{ki}(t) + \gamma_{kij}(t) - \delta_{kij}(t) - \nu_{kij}(t) \geq 0 \nonumber \\
            & \beta_{ki}(t), \gamma_{kij}(t), \delta_{kij}(t), \nu_{kij}(t) \geq 0 \nonumber 
    \end{eqnarray}
    From which we combine the two constraints using the substitution $\nu_j(t) = \delta_j(t) + G_{kj} \ot{\mu}_j \int_0^t \eta_j(s)\dd s$.
    This gives the result in \Cref{tab:robust-counterpart-equation-balance-b}.
    
    For polyhedral uncertainty sets,
    the single corresponding dual problem is:
   \begin{eqnarray}
        \min_{\delta(t)} ~ & \sum\limits_{m} d_{m} \delta_{m}(t) \nonumber \\
        s.t. ~ & \sum\limits_{m} D_{mj} \delta_{m} 
            = - G_{kj} \ot{\mu}_j \int\limits_0^t \eta_{j}(s) \dd s, \quad \delta(t) \geq 0 \text{.} \nonumber 
    \end{eqnarray}
    This gives the result in \Cref{tab:robust-counterpart-equation-balance-b}.

\end{proof}




\end{document}